\theoremstyle{plain} 
    \newtheorem{theorem}{Theorem}
    \newtheorem{lemma}[theorem]{Lemma}
\theoremstyle{definition} 
\DeclareMathOperator{\R}{\mathbb{R}}
\DeclareMathOperator{\Z}{\mathbb{Z}}
\DeclareMathOperator{\De}{d}
\DeclareMathOperator{\one}{\mathbbm{1}} 
\newcommand{\I}{\mathcal I}
\newcommand{\Co}{\mathcal C}
\newcommand{\E}{\mathsf{E}}
\newcommand{\Ex}[1]{\mathsf{E}\left[#1 \right]}
\newcommand{\var}[1]{\mathsf{Var}\left[#1 \right]}
\newcommand{\prob}{\mathsf{P}}
\DeclareMathOperator{\e}{e}
\renewcommand{\O}[1]{\mathrm{O}\left(#1\right)} 
\renewcommand{\o}[1]{\mathrm{o}\left(#1\right)} 
\newcommand{\f}{\frac}  
\newcommand{\eps}{\epsilon}
\newcommand{\eq}[1]{\begin{equation#1}}
\newcommand{\eeq}[1]{\end{equation#1}}
\newcommand{\eqa}[1]{\begin{eqnarray#1}}
\newcommand{\eeqa}[1]{\end{eqnarray#1}}
\newcommand{\vr}{\varphi}
\begin{document}

\title[Extremal process of the supercritical Gaussian Free Field]{A note on the extremal process of the supercritical Gaussian Free Field}
\author[A. Chiarini]{Alberto Chiarini}
\thanks{The first author's research is supported by RTG 1845.}
\address{Technische Universit\"at Berlin,
MA 766, Strasse des 17. Juni 136, 10623
Berlin, Germany}
\email{chiarini@math.tu-berlin.de}
\author[A. Cipriani]{Alessandra Cipriani}
\address{Weierstrass Institute, Mohrenstrasse 39, 10117 Berlin, Germany}
\email{Alessandra.Cipriani@wias-berlin.de}

\author[R. S. Hazra]{Rajat Subhra Hazra}
\address{Theoretical Statistics and Mathematics Unit, Indian Statistical Institute, 203, B.T. Road, Kolkata, 700108, India}
\email{rajatmaths@gmail.com}
\begin{abstract}
We consider both the infinite-volume discrete Gaussian Free Field (DGFF) and the DGFF with zero boundary conditions outside a finite box in dimension larger or equal to $3$. We show that the associated extremal process converges to a Poisson point process. The result follows from an application of the Stein-Chen method from \citet{AGG}.
\end{abstract}
\maketitle
\section{Introduction} 
In this article we study the behavior of the extremal process of the DGFF in dimension larger or equal to 3. This extends the result presented in \cite{CCH2015} in which the convergence of the rescaled maximum of the infinite-volume DGFF and the $0$-boundary condition field was shown. It was proved there that the field belongs to the maximal domain of attraction of the Gumbel distribution; hence, a natural question that arises is that of describing more precisely its extremal points. In dimension $2$, this was carried out by \citet{BisLou, BisLou2} complementing a result of \cite{BrDiZe} on the convergence of the maximum; namely, the characterization of the limiting point process with a random mean measure yields as by-product an integral representation of the maximum. The extremes of the DGFF in dimension 2 have deep connections with those of Branching Brownian Motion (\cite{ABBS,ABK,ABK2, ABK3}). These works showed that the limiting point process is a randomly shifted decorated Poisson point process, and we refer to \cite{SubZei} for structural details. In $d\ge 3$, one does not get a non-trivial decoration but instead a Poisson point process analogous to the extremal process of independent Gaussian random variables. To be more precise, we let $E:=[0,\, 1 ]^d \times (-\infty, \,+\infty]$ and $V_N:=[0,\,n-1]^d\cap \Z^d$ the hypercube of volume $N=n^d$. Let $(\vr_\alpha)_{\alpha\in \Z^d}$ be the infinite-volume DGFF, that is a centered Gaussian field on the square lattice with covariance $g(\cdot,\,\cdot)$, where $g$ is the Green's function of the simple random walk. We define the following sequence of point processes on $E$:
\begin{equation}\label{eq:def:pp}
\eta_n(\cdot) :=\sum_{\alpha \in V_N} \varepsilon_{\left(\frac{\alpha}{n}, \frac{\vr_\alpha-b_N}{a_N}\right)}(\cdot)
\end{equation}

where $\varepsilon_x(\cdot)$, $x\in E$, is the point measure that gives mass one to a set containing $x$ and zero otherwise, and
\begin{equation}\label{eq:cs}
b_N:=\sqrt{g(0)}\left[\sqrt{2 \log N}-\frac{\log \log N+\log(4\pi)}{2\sqrt{2 \log N}}\right],\quad\quad a_N:=g(0)(b_N)^{-1}.
\end{equation}
Here $g(0)$ denotes the variance of the DGFF.
Our main result is 
\begin{theorem}\label{thm:ppp}
For the  sequence of point processes $\eta_n$ defined in~\eqref{eq:def:pp} we have that
$$\eta_n \overset{d}\rightarrow \eta,$$
as $n\to+\infty$, where $\eta$ is a Poisson random measure on $E$ with intensity measure given by $\De t \otimes\left( \e^{-z} \De z\right)$ where $ \De t \otimes \De z$ is the Lebesgue measure on $E$, and $\overset{d}\rightarrow $ is the convergence in distribution on $\mathcal M_p(E)$\footnote{$\mathcal M_p(E)$ denotes the set of (Radon) point measures on $E$ endowed with the topology of vague convergence. 
}.
\end{theorem}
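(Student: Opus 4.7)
My strategy is to apply Kallenberg's theorem on weak convergence to a simple Poisson random measure: it suffices to verify, for every $R$ in a convergence-determining class of relatively compact subsets of $E$, both $\E[\eta_n(R)]\to\mu(R)$ and $\prob(\eta_n(R)=0)\to\e^{-\mu(R)}$. I take the class of finite disjoint unions of semi-open rectangles $A\times(z,\infty]$ with $A\subset[0,1]^d$ a hyperrectangle and $z\in\R$. The intensity convergence is immediate from stationarity of $(\vr_\alpha)$: one has $\E[\eta_n(A\times(z,\infty])]=\#\{\alpha\in V_N:\alpha/n\in A\}\cdot\prob(\vr_0>b_N+a_Nz)$, and standard Mills-ratio asymptotics for an $N(0,g(0))$ variable combined with the normalization~\eqref{eq:cs} yield $N\,\prob(\vr_0>b_N+a_Nz)\to\e^{-z}$, so $\E[\eta_n(R)]\to\mathrm{Leb}(A)\e^{-z}=\mu(R)$.

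\textbf{Stein-Chen reduction.} For $R=A\times(z,\infty]$, write $u_N:=b_N+a_Nz$, $X_\alpha:=\one\{\alpha/n\in A,\,\vr_\alpha>u_N\}$, and $W:=\eta_n(R)=\sum_\alpha X_\alpha$. The Arratia-Goldstein-Gordon bound gives
\[
 d_{TV}\bigl(\mathcal L(W),\mathrm{Po}(\E W)\bigr)\le b_1+b_2+b_3
\]
for any choice of dependency neighborhoods $B_\alpha$; I take $B_\alpha=\{\beta\in V_N:|\alpha-\beta|_\infty\le r_N\}$ with a slowly diverging $r_N$ (for instance $r_N=(\log N)^{2/(d-2)}$). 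Since $\E X_\alpha=O(N^{-1})$ uniformly in $\alpha$, the first term satisfies $b_1\le N|B_\alpha|(\E X_\alpha)^2=O(r_N^d/N)\to 0$. For $b_2=\sum_\alpha\sum_{\alpha\ne\beta\in B_\alpha}\E[X_\alpha X_\beta]$ I use the bivariate Gaussian upper tail
\[
 \prob(\vr_\alpha>u_N,\vr_\beta>u_N)\le Cu_N^{-2}\exp\bigl(-u_N^2/[g(0)(1+\rho_{\alpha\beta})]\bigr),\qquad\rho_{\alpha\beta}:=g(\alpha-\beta)/g(0).
\]
In $d\ge 3$, $g(\alpha-\beta)=O(|\alpha-\beta|^{2-d})$, and splitting the sum at the scale $|\beta|\sim(\log N)^{1/(d-2)}$ controls the short-range contribution by $N^{-c'}(\log N)^{d/(d-2)}$ (using the uniform gap $\rho_{\alpha\beta}\le 1-c_0$ for $|\beta|\ge 1$) and the long-range contribution by $r_N^d/(u_N^2 N)$, so $b_2\to 0$.

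\textbf{Main obstacle: the mixing term $b_3$.} The delicate step is
\[
 b_3=\sum_\alpha\E\bigl|\prob(X_\alpha=1\mid\mathcal F_\alpha^c)-\prob(X_\alpha=1)\bigr|,\qquad\mathcal F_\alpha^c:=\sigma(\vr_\beta:\beta\notin B_\alpha).
\]
This is where the supercriticality $d\ge 3$ really enters. I would exploit the Gaussian orthogonal decomposition $\vr_\alpha=M_\alpha+Z_\alpha$, with $M_\alpha:=\E[\vr_\alpha\mid\mathcal F_\alpha^c]$ measurable with respect to $\mathcal F_\alpha^c$ and $Z_\alpha\sim N(0,\sigma_\alpha^2)$ independent of $\mathcal F_\alpha^c$, where $\sigma_\alpha^2=g^{B_\alpha}(\alpha,\alpha)$ is the Dirichlet Green's function of simple random walk killed on $\partial B_\alpha$. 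Classical transient-walk estimates yield $\var{M_\alpha}=g(0)-\sigma_\alpha^2=O(r_N^{2-d})$. Writing $\prob(X_\alpha=1\mid\mathcal F_\alpha^c)=\bar\Phi\bigl((u_N-M_\alpha)/\sigma_\alpha\bigr)$ and comparing to $\bar\Phi(u_N/\sqrt{g(0)})$ by first-order Taylor expansion in the small random variable $M_\alpha$, the per-site contribution is of order $\E X_\alpha\cdot(u_N/\sqrt{g(0)})\cdot(u_N\var{M_\alpha}+\sqrt{\var{M_\alpha}})$, yielding $b_3=O(\sqrt{\log N}\cdot r_N^{(2-d)/2})\to 0$ as soon as $r_N\gg(\log N)^{1/(d-2)}$. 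The tension between $b_1,b_2$ (preferring $r_N$ small) and $b_3$ (preferring $r_N$ large) leaves a wide open window in $d\ge 3$ and is the technical core of the argument. The passage from a single rectangle to finite disjoint unions is by applying AGG to the vector of indicators of the components, and the zero-boundary case alluded to in the abstract reduces, via the orthogonal decomposition of the infinite-volume DGFF into a Dirichlet piece plus a harmonic extension, to the same estimates with a controllable correction near $\partial V_N$.
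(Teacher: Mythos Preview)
Your approach is essentially the same as the paper's: both apply Kallenberg's theorem, verifying the avoidance probabilities via the Arratia--Goldstein--Gordon Stein--Chen bound with dependency neighborhoods of polylogarithmic radius, and both control $b_3$ through the Gaussian conditional decomposition (equivalently, the Markov property of the DGFF) together with the $O(r_N^{2-d})$ bound on the variance of the conditional mean. The only cosmetic differences are the specific choice of $r_N$ (the paper takes $(\log N)^{2+2\eps}$ versus your $(\log N)^{2/(d-2)}$) and that the paper handles the $b_3$ comparison by splitting on the event $\{|\mu_\alpha|>u_N(z)^{-1-\eps}\}$ and invoking explicit Mills-ratio estimates from \cite{CCH2015}, whereas you phrase it as a Taylor expansion of $\bar\Phi$; both routes lead to the same window of admissible $r_N$.
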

The proof is based on the application of the two-moment method of \cite{AGG} that allows us to compare the extremal process of the DGFF and a Poisson point process with the same mean measure. To prove that the two processes converge, we will exploit a classical theorem by Kallenberg.

It is natural then to consider also convergence for the DGFF $(\psi_\alpha)_{\alpha\in \Z^d}$ with zero boundary conditions outside $V_N$. For the sequences of point measures
\begin{equation}\label{eq:def:pp_2}
\rho_n(\cdot) :=\sum_{\alpha \in V_N} \varepsilon_{\left(\frac{\alpha}{n}, \frac{\psi_\alpha-b_N}{a_N}\right)}(\cdot)
\end{equation}
we establish the following Theorem:
\begin{theorem}\label{thm:ppp_2}
For the  sequence of point processes $\rho_n$ defined in~\eqref{eq:def:pp_2} we have that
$$\rho_n \overset{d}\rightarrow \eta,$$
as $n\to+\infty$ in $\mathcal M_p(E)$, where $\eta$ is as in Theorem~\ref{thm:ppp}.
\end{theorem}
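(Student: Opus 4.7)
The plan is to reduce Theorem~\ref{thm:ppp_2} to Theorem~\ref{thm:ppp} by coupling the two fields through the domain Markov property of the DGFF. On $V_N$ the infinite-volume field decomposes as $\varphi_\alpha = \psi_\alpha + h_\alpha$, where $\psi$ is a zero-boundary DGFF on $V_N$ and $h_\alpha := \mathsf{E}[\varphi_\alpha \mid \sigma(\varphi_\beta : \beta \notin V_N)]$ is the discrete harmonic extension of the exterior values; moreover $\psi$ and $h$ are independent. Under this coupling, $\eta_n$ and $\rho_n$ share the same spatial coordinates and differ in height only by the shift $-h_\alpha/a_N$. Since Theorem~\ref{thm:ppp} gives $\eta_n \overset{d}{\to} \eta$, it suffices to show that for every $f \in C_c(E)$, the difference $\int f\,d\eta_n - \int f\,d\rho_n$ tends to $0$ in probability.

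Any compact subset of $E$ is contained in $[0,1]^d \times [L,\infty]$ for some $L \in \mathbb{R}$. I would partition $V_N = I_N \cup B_N$ into an interior $I_N := \{\alpha \in V_N : d(\alpha, \partial V_N) \geq k_N\}$ and a boundary layer $B_N := V_N \setminus I_N$ for a sequence $k_N \to \infty$ with $k_N = o(N^{1/d})$. For $\alpha \in B_N$ both $\varphi_\alpha$ and $\psi_\alpha$ are centered Gaussians with variance at most $g(0)$, and the Mills' ratio asymptotic that drives Theorem~\ref{thm:ppp} gives $\mathsf{P}(\varphi_\alpha \geq b_N + a_N L) \sim N^{-1} e^{-L}$, with the same upper bound valid for $\psi_\alpha$. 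Since $|B_N| = \mathrm{O}(N^{(d-1)/d} k_N)$, a union bound shows that with probability tending to $1$ no site in $B_N$ contributes to either $\eta_n$ or $\rho_n$ when tested against $f$.

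On the interior, the Markov property gives $\mathrm{Var}(h_\alpha) = g(0) - g^{V_N}(\alpha,\alpha) = \mathsf{E}_\alpha[g(S_{\tau_{V_N^c}}, \alpha)]$, and the transient Green function bound $g(x,y) \leq C|x-y|^{2-d}$ yields $\mathrm{Var}(h_\alpha) \leq C k_N^{-(d-2)}$ uniformly over $\alpha \in I_N$. A Gaussian union bound then produces $\mathsf{P}(\max_{\alpha \in I_N} |h_\alpha| \geq \epsilon\, a_N) \leq 2N \exp(-c\epsilon^2 k_N^{d-2}/\log N)$, which tends to $0$ provided $k_N^{d-2}/(\log N)^2 \to \infty$. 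A choice such as $k_N = (\log N)^{3/(d-2)}$ meets both constraints when $d \geq 3$, and on the corresponding high-probability event $|\varphi_\alpha - \psi_\alpha|/a_N < \epsilon$ uniformly over $\alpha \in I_N$.

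Combining the two estimates, on a high-probability event every $\alpha$ whose rescaled height (under either field) lies in the support of $f$ belongs to $I_N$, and the two height coordinates at such $\alpha$ differ by less than $\epsilon$. By uniform continuity of $f$ and the tightness of the number of sites with $\varphi_\alpha \geq b_N + a_N(L-\epsilon)$ (controlled via the intensity computation underlying Theorem~\ref{thm:ppp}), the total discrepancy $|\int f\,d\eta_n - \int f\,d\rho_n|$ is $\mathrm{O}(\omega_f(\epsilon))$ up to a vanishing error; sending first $n \to \infty$ and then $\epsilon \to 0$ closes the argument. The main technical obstacle is the trade-off in choosing $k_N$: it must grow fast enough that the harmonic shift $h_\alpha$ becomes negligible on the scale $a_N \sim (\log N)^{-1/2}$, while staying small enough that the boundary layer carries no extremal points.
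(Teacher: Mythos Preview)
Your argument is correct and takes a genuinely different route from the paper's. The paper does \emph{not} reduce Theorem~\ref{thm:ppp_2} to Theorem~\ref{thm:ppp} via coupling; instead it essentially reruns the Stein--Chen/Kallenberg verification for the zero-boundary field. Concretely, the paper first proves a separate lemma showing that the restricted process $\rho_n^\delta$ on a fixed bulk $V_N^\delta$ converges to a Poisson point process with intensity $\De t|_{[\delta,1-\delta]^d}\otimes e^{-z}\De z$, by redoing the bounds on $b_1,b_2,b_3$ with $g_N$ in place of $g$ and invoking Lemma~\ref{lemma:almost_g}. It then applies a converging-together theorem in the metric $\mathrm{d}_p$ on $\mathcal M_p(E)$: $\rho_n^\delta\overset{d}\to\rho^\delta$, $\rho^\delta\overset{d}\to\eta$ as $\delta\to 0$ via Laplace functionals, and $\mathrm{d}_p(\rho_n,\rho_n^\delta)\to 0$ in probability uniformly in $n$ as $\delta\to 0$, the last point coming from a first-moment bound on the boundary layer $V_N\setminus V_N^\delta$.

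Your coupling $\varphi=\psi+h$ with a single $N$-dependent width $k_N$ replaces this two-parameter limit and the entire repetition of the Stein--Chen computation: once you show $\max_{\alpha\in I_N}|h_\alpha|=o_P(a_N)$ and that $B_N$ carries no extremes of either field, Slutsky on $\rho_n(f)-\eta_n(f)$ finishes immediately from Theorem~\ref{thm:ppp}. This is shorter and more conceptual. The paper's approach, on the other hand, yields Lemma~\ref{lemma:bulk} as a standalone statement about the bulk process and never needs the coupling, which keeps the finite-volume analysis self-contained. Both use the same underlying ingredients: Lemma~\ref{lemma:almost_g} for the variance comparison (your bound $\mathrm{Var}(h_\alpha)\le Ck_N^{2-d}$ is exactly this), and a first-moment/Mills ratio estimate to kill the boundary contribution.
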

The convergence is shown by reducing ourselves to check the conditions of Kallenberg's Theorem on the bulk of $V_N$, where we have a good control on the drift of the conditioned field, and then by showing that the process on the whole of $V_N$ and on the bulk are close as $n$ becomes large.

The outline of the paper is as follows. In Section~\ref{sec:DGFF} we will recall the definition of DGFF and the Stein-Chen method, while Section~\ref{sec:inf} and Section~\ref{sec:fin} are devoted to the proofs of Theorems~\ref{thm:ppp} and \ref{thm:ppp_2} respectively.
\section{Preliminaries}\label{sec:DGFF}
\subsection{The DGFF}
Let $d\ge 3$ and  denote with $\|\,\cdot\,\|$ the $\ell_\infty$-norm on $\Z^d$. Let $\psi=(\psi_\alpha)_{\alpha\in \Z^d}$ be a discrete Gaussian Free Field with zero boundary conditions outside $\Lambda\subset \Z^{d\phantom{d}}$. On the space $\Omega:=\R^{\Z^d}$ endowed with its product topology, its law $\widetilde \prob_\Lambda$ can be explicitly written as
$$\widetilde \prob_\Lambda(\De \psi)=\frac1{Z_\Lambda}\exp\left(-\frac1{2d}\sum_{\alpha,\,\beta\in \Z^d:\,\|\alpha-\beta\|=1}\left(\psi_\alpha-\psi_\beta\right)^2\right)\prod_{\alpha\in \Lambda}\De \psi_\alpha \prod_{\alpha\in \Z^d\setminus\Lambda}\varepsilon_0( \psi_\alpha).$$
In other words $\psi_\alpha=0$ $\widetilde \prob_\Lambda$-a.~s. if $\alpha\in \Z^d\setminus \Lambda$, and $(\psi_\alpha)_{\alpha\in \Lambda}$ is a multivariate Gaussian random variable with mean zero and covariance $(g_\Lambda(\alpha,\,\beta))_{\alpha,\,\beta\in \Z^d}$, where $g_\Lambda$ is the Green's function of the discrete Laplacian problem with Dirichlet boundary conditions outside $\Lambda$. For a thorough review on the model the reader can refer for example to \cite{ASS}. It is known \cite[Chapter 13]{Georgii} that the finite-volume measure $\psi$ admits an infinite-volume limit as $\Lambda \uparrow \Z^d$ in the weak topology of probability measures. This field will be denoted as $\vr=(\vr_\alpha)_{\alpha\in \Z^d}$. It is a centered Gaussian field with covariance matrix $g(\alpha,\,\beta)$ for $\alpha,\,\beta\in \Z^d$. With a slight abuse of notation, we write $g(\alpha-\beta)$ for $g(0,\,\alpha-\beta)$ and also $g_\Lambda(\alpha)=g_\Lambda(\alpha,\,\alpha)$. $g$ admits a so-called random walk representation: if $\mathbb P_\alpha$ denotes the law of a simple random walk $S$ started at $\alpha\in \Z^d$, then
$$
g(\alpha,\,\beta)=\mathbb E_\alpha\left[\sum_{n\ge 0}\one_{\left\{S_n=\beta\right\}}\right].
$$
In particular this gives $g(0)<+\infty$ for $d\ge 3$. A comparison of the covariances in the infinite and finite-volume is possible in the {\em bulk} of $V_N$: for $\delta>0$ this is defined as
\eq{}\label{eq:bulk} V_N^\delta:=\left\{ \alpha\in V_N:\,\|\alpha-\beta\|>\delta n,\,\forall\,\beta\in \Z^d\setminus V_N\right\}.\eeq{} In order to compare covariances in the finite and infinite-volume field, we recall the following Lemma, whose proof is presented in \citet[Lemma 7]{CCH2015}).
\begin{lemma}\label{lemma:almost_g}
For any $\delta>0$ and $\alpha,\,\beta\in V_N^\delta$ one has
\begin{equation}
g(\alpha,\beta)-C_d\left(\delta N^{1/d}\right)^{2-d}\le g_{V_N}(\alpha,\beta)\le g(\alpha,\beta).
\end{equation}
In particular we have, $g_{V_N}(\alpha)=g(0)\left(1+\O{N^{(2-d)/d}}\right)$ uniformly for $\alpha\in V_N^\delta$. 
\end{lemma}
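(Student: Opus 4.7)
The plan is to obtain both inequalities from the random-walk representation of the Green's functions, and then derive the diagonal asymptotic by specialising to $\alpha=\beta$.

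For the upper bound, I would note that if $\tau_{V_N}$ denotes the first exit time of the simple random walk $S$ from $V_N$, then
$$g_{V_N}(\alpha,\beta)=\mathbb E_\alpha\left[\sum_{n=0}^{\tau_{V_N}-1}\one_{\{S_n=\beta\}}\right]\le \mathbb E_\alpha\left[\sum_{n\ge 0}\one_{\{S_n=\beta\}}\right]=g(\alpha,\beta),$$
because truncating a non-negative series at the exit time can only decrease it. For the lower bound, I would apply the strong Markov property at $\tau_{V_N}$ to the full sum on the right, obtaining
$$g(\alpha,\beta)=g_{V_N}(\alpha,\beta)+\mathbb E_\alpha\!\left[g(S_{\tau_{V_N}},\beta)\right].$$
So what remains is to control the last expectation.

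Here I would invoke the classical pointwise bound on the SRW Green's function in dimension $d\ge 3$, namely $g(x,y)\le C_d\|x-y\|^{2-d}$, which is a standard consequence of the local central limit theorem. Since $\alpha,\beta\in V_N^\delta$ and the exit point $S_{\tau_{V_N}}$ lies in $\Z^d\setminus V_N$, the definition of the bulk~\eqref{eq:bulk} forces $\|S_{\tau_{V_N}}-\beta\|>\delta n=\delta N^{1/d}$ almost surely, so
$$\mathbb E_\alpha\!\left[g(S_{\tau_{V_N}},\beta)\right]\le C_d(\delta N^{1/d})^{2-d},$$
and rearranging gives the stated lower bound.

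For the consequence, I would set $\alpha=\beta\in V_N^\delta$. Using $g(\alpha,\alpha)=g(0)$ together with $C_d(\delta N^{1/d})^{2-d}=C_d\delta^{2-d}N^{(2-d)/d}$, the two-sided estimate reads
$$g(0)-C_d\delta^{2-d}N^{(2-d)/d}\le g_{V_N}(\alpha)\le g(0),$$
and dividing by the positive constant $g(0)$ absorbs the $\delta$-dependence into the implicit constant of $\O{N^{(2-d)/d}}$, yielding $g_{V_N}(\alpha)=g(0)(1+\O{N^{(2-d)/d}})$ uniformly in $\alpha\in V_N^\delta$.

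The only non-trivial ingredient is the pointwise $\|x-y\|^{2-d}$ decay of $g$, which is really the crux; once it is in hand the rest is a one-line strong Markov decomposition. In particular, no estimate on the harmonic measure on $\partial V_N$ or on hitting probabilities is needed, because the bulk condition makes the distance from the exit point to $\beta$ deterministically large.
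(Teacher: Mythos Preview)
Your argument is correct. The paper does not actually prove this lemma in the text; it merely cites \cite[Lemma~7]{CCH2015} for the proof. Your proposal is precisely the standard argument one would expect there: the random-walk representation gives the upper bound trivially, the strong Markov decomposition at the exit time $\tau_{V_N}$ isolates the defect $\mathbb E_\alpha[g(S_{\tau_{V_N}},\beta)]$, and the classical bound $g(x,y)\le C_d\|x-y\|^{2-d}$ together with the bulk condition $\|S_{\tau_{V_N}}-\beta\|>\delta n$ finishes it. Nothing is missing.
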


\subsection{The Stein-Chen method}\label{subsec:PPP}
%
As main tool of this article we will use (and restate here) a theorem from \cite{AGG}. Consider a sequence of Bernoulli random variables $(X_\alpha)_{\alpha\in \mathcal I}$ where $ X_\alpha\sim Be(p_\alpha)$ and $\I$ is some index set. For each $\alpha$ we define a subset $B_\alpha\subseteq \mathcal I$ which we consider a ``neighborhood'' of dependence for the variable $X_\alpha$, such that $X_\alpha$ is nearly independent from $X_\beta$ if $\beta\in \I\setminus B_\alpha$. Set
$$
b_1:=\sum_{\alpha\in \I}\sum_{\beta\in B_\alpha}p_\alpha p_\beta,
$$
$$
b_2:=\sum_{\alpha\in \I}\sum_{\alpha\neq \beta\in B_\alpha}\Ex{X_\alpha X_\beta},
$$
$$
b_3:=\sum_{\alpha\in \I}\Ex{\left|\Ex{X_\alpha-p_\alpha\left|\right.\mathcal H_1}\right|}
$$ 
where
$$
\mathcal H_1:=\sigma\left(X_\beta:\,\beta\in \I\setminus B_\alpha\right).
$$
\begin{theorem}[{\citet[Theorem 2]{AGG}}]\label{thm:AGG2}
Let $\mathcal I$ be an index set. Partition the index set $\mathcal I$ into disjoint non-empty sets $\I_1,\,\ldots, \,\I_k$. For any $\alpha\in \I$, let $(X_\alpha)_{\alpha\in\I}$ be a dependent Bernoulli process with parameter $p_\alpha$. Let $(Y_\alpha)_{\alpha\in \I}$ be independent Poisson random variables with intensity $p_\alpha$. Also let
$$W_j:= \sum_{\alpha\in \I_j} X_\alpha \quad\text{ and }\quad Z_j:=\sum_{\alpha\in \I_j} Y_\alpha\quad \text{ and}\quad\lambda_j:= \E[W_j]=\E[Z_j].$$
Then 
\begin{equation}\label{eq:errorbd}
\| \mathcal L( W_1, \ldots, W_k)- \mathcal L( Z_1,\ldots, Z_k)\|_{TV}\le 2\min\left\{1,\, 1.4 \left(\min \lambda_j\right)^{-1/2}\right\}(2b_1+2b_2+b_3)
\end{equation}
where $\|\cdot\|_{TV}$ denotes the total variation distance and $\mathcal L( W_1, \ldots, W_k) $ denotes the joint law of these random variables. 
\end{theorem}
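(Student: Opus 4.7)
The plan is to apply the Stein--Chen generator method with the multivariate Poisson target $\pi := \bigotimes_{j=1}^k \mathrm{Poi}(\lambda_j)$ on $\Z_{\ge 0}^k$. First I would realise $\pi$ as the invariant law of the $k$-type immigration-death process with generator
$$
(\mathcal A h)(w) = \sum_{j=1}^k \lambda_j\bigl[h(w+e_j)-h(w)\bigr] + \sum_{j=1}^k w_j\bigl[h(w-e_j)-h(w)\bigr],
$$
and, for each event $A\subseteq\Z_{\ge 0}^k$, solve the Stein equation $\mathcal A g_A = \one_A - \pi(A)$ by $g_A(w) = -\int_0^\infty [\prob^w(X_t\in A)-\pi(A)]\,\De t$. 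The crucial quantitative input is the coordinate-wise Stein factor
$$
c_j := \sup_{w,A}\bigl|g_A(w+e_j) - g_A(w)\bigr| \le \min\bigl\{1,\,1.4\,\lambda_j^{-1/2}\bigr\},
$$
proved by running two copies of the $k$-type process started from $w$ and $w+e_j$ with a synchronous ``common updates'' coupling; projecting onto coordinate $j$ reduces the estimate to Barbour's one-dimensional $M/M/\infty$ contraction bound.

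Next, using $\|\mathcal L(W)-\mathcal L(Z)\|_{TV} \le \tfrac{1}{2}\sup_{\|f\|_\infty\le 1}|\E[\mathcal A g(W)]|$ and expanding via $\lambda_j = \sum_{\alpha\in\I_j} p_\alpha$ and $W_j = \sum_{\alpha\in\I_j} X_\alpha$, write $j(\alpha)$ for the unique index with $\alpha\in\I_{j(\alpha)}$ and $\Delta_\alpha h(w) := h(w+e_{j(\alpha)})-h(w)$. Then
$$
\E[\mathcal A g_A(W)] = \sum_{\alpha\in\I}\Bigl\{p_\alpha\,\E[\Delta_\alpha g_A(W)] - \E\bigl[X_\alpha\,\Delta_\alpha g_A(W-e_{j(\alpha)})\bigr]\Bigr\}.
$$
For each $\alpha$, introduce $V_\alpha := W - \sum_{\beta\in B_\alpha} X_\beta e_{j(\beta)}$, which is $\mathcal H_1$-measurable by construction. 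Telescoping $g_A(W)-g_A(V_\alpha)$ one step at a time along the coordinates indexed by $B_\alpha$ and applying the Lipschitz bound $c_{j(\beta)}$ at each step, the cost of replacing $W$ with $V_\alpha$ in the first sum is bounded in expectation by $c\sum_\alpha p_\alpha\sum_{\beta\in B_\alpha} p_\beta = c\,b_1$, and the cost in the second (which carries the extra factor $X_\alpha$) is bounded by $c\,b_2$; performing the replacement inside each of the two ``slots'' of $\Delta_\alpha g_A$ produces the coefficient $2$ in front of $b_1 + b_2$.

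After these swaps, the main term that remains is
$$
\sum_{\alpha\in\I}\E\bigl[(p_\alpha-X_\alpha)\,\Delta_\alpha g_A(V_\alpha)\bigr] = \sum_{\alpha\in\I}\E\bigl[\Delta_\alpha g_A(V_\alpha)\cdot \E[X_\alpha-p_\alpha\mid\mathcal H_1]\bigr],
$$
by the tower property, hence bounded in absolute value by $c\,b_3$. Collecting the three contributions, factoring out the common Lipschitz constant $c=\min\{1,1.4(\min_j\lambda_j)^{-1/2}\}$, and taking the supremum over $A$ yields \eqref{eq:errorbd}. The principal technical obstacle is the multivariate Stein-factor inequality $c_j \le 1.4\,\lambda_j^{-1/2}$: the bound must depend only on the single parameter $\lambda_j$ even though $g_A$ couples all $k$ coordinates through the event $A$, which forces the careful synchronous product coupling described above; the rest of the argument is algebraic bookkeeping with the definitions of $b_1,b_2,b_3$ and the tower property for $\mathcal H_1$.
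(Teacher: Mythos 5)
The paper does not prove this statement: it quotes, with a citation, Theorem~2 of \cite{AGG}, so there is no in-paper proof to compare against. Your reconstruction follows the right route — Barbour's generator method, as in Arratia, Goldstein and Gordon. The expansion of $\E[\mathcal A g_A(W)]$ via $\lambda_j=\sum_{\alpha\in\I_j}p_\alpha$, the introduction of the $\mathcal H_1$-measurable $V_\alpha=W-\sum_{\beta\in B_\alpha}X_\beta e_{j(\beta)}$, the telescoping across $B_\alpha$ (which correctly doubles $b_1$ and $b_2$ because $\Delta_\alpha g_A$ carries two arguments), and the tower-property step that produces $b_3$ are all sound; the leading factor of $2$ matches the convention $\|\mu-\nu\|_{TV}=2\sup_A|\mu(A)-\nu(A)|$ behind \eqref{eq:errorbd}. (Your intermediate line $\|\mathcal L(W)-\mathcal L(Z)\|_{TV}\le\tfrac{1}{2}\sup_f|\E[\mathcal A g(W)]|$ has the factor pointing the wrong way, but the remainder of the argument makes clear you intend $\sup_A|\prob(W\in A)-\pi(A)|=\sup_A|\E[\mathcal A g_A(W)]|$.)

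The step you dispatch too quickly is the multivariate Stein factor $c_j\le\min\{1,\,1.4\lambda_j^{-1/2}\}$. The claim that ``projecting onto coordinate $j$'' reduces the estimate to the one-dimensional $M/M/\infty$ bound ignores the fact that, after the synchronous coupling, the set that the surplus particle in coordinate $j$ must hit at time $t$ is the random, time-dependent slice $A_{Y_t^{(-j)}}:=\{m:(Y_t^{(-j)},m)\in A\}$; the inner integral is therefore not a one-dimensional Stein solution for a fixed subset of $\Z_{\ge 0}$, and the one-dimensional Lipschitz constant cannot simply be quoted. The crude bound $c_j\le 1$ does follow at once from the coupling (the surplus particle dies at rate one), but the refinement $1.4\lambda_j^{-1/2}$ requires the explicit marginal transition kernel $\mathrm{Bin}(w_j,\e^{-t})+\mathrm{Poi}(\lambda_j(1-\e^{-t}))$ of coordinate $j$ together with a direct estimate uniform over the slices, as in Barbour and Eagleson and as carried out in \cite{AGG} themselves. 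Supplying or citing that estimate would close the gap; the rest of your bookkeeping is correct.
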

\section{Proof of Theorem~\ref{thm:ppp}: the infinite-volume case}\label{sec:inf}
\begin{proof}
We recall that $E=[0\, ,1 ]^d \times (-\infty, +\infty]$ and $V_N=[0,\,n-1]^d\cap \Z^d$. 
To show the convergence of $\eta_n$ to $\eta$, we will exploit Kallenberg's theorem \cite[Theorem 4.7]{KallenbergRan}. \label{cond}According to it, we need to verify the following conditions: \begin{enumerate}
\item[i)] for any $A$, a bounded rectangle\footnote{A {\em bounded rectangle} has the form $J_1\times\cdots\times J_d$ with $J_i=[0,\,1]\cap (a_i,\,b_i]$, $a_i,\,b_i\in \R$ for all $1\le i\le d$.} in $[0,1]^d$, and $R=(x,y]\subset (-\infty,+\infty]$ 
$$\E[ \eta_n( A\times (x,y])]\to \E[ \eta( A\times (x,y])]=|A|( \e^{-x}-\e^{-y}).$$
We adopt the convention $\e^{-\infty}=0$ and the notation $|A|$ for the Lebesgue measure of $A$.
\item[ii)] For all $k\ge 1$, and $A_1,\, A_2,\,\ldots,\, A_k$ disjoint rectangles in $[0,1]^d$ and $R_1,\,R_2,\,\ldots, \,R_k$, each of which is a finite union o disjoint f intervals of the type $(x,\,y] \subset (-\infty,+\infty]$, 
\begin{align}
&\prob\left( \eta_n( A_1\times R_1)=0,\, \ldots,\, \eta_n(A_k\times R_k)=0\right)\nonumber\\
&\qquad \to \prob\left(\eta( A_1\times R_1)=0, \,\ldots,\, \eta(A_k\times R_k)=0\right)=\exp\left(-\sum_{j=1}^k |A_j| \omega\left(R_j\right)\right)\label{eq:cond_two}
\end{align}
where $\omega(\De z):=\e^{-z}\De z$.
\end{enumerate}
Let us denote by $u_N(z):= a_Nz+b_N$. The first condition follows by Mills ratio
\eqa{}\label{eq:Mills}
\left( 1-\frac1{t^2}\right)\frac{\e^{-{t^2/2}}}{\sqrt{2\pi}t}\le \prob\left(\mathcal N(0,\,1)>t \right)\le \frac{\e^{-{t^2/2}}}{\sqrt{2\pi}t},\quad t>0.
\eeqa{}
More precisely
\begin{align}
\E[ \eta_n( A\times (x,y])]&=\sum_{\alpha\in nA\cap V_N} \prob\left( \vr_\alpha\in (u_N(x), u_N(y)]\right)\nonumber\\
&{\le} \sum_{\alpha\in nA\cap V_N} \left( \frac{\e^{-\frac{u_N(x)^2}{2g(0)}}}{\sqrt{2\pi}u_N(x)}-\frac{\e^{-\frac{u_N(y)^2}{2g(0)}}}{\sqrt{2\pi}u_N(y)}\left(1-\frac1{u_N(y)^2}\right)\right)\label{eq:star}\\
&\le  |n A\cap V_N|\left(\frac{\e^{-x+\o{1}}}{N}-\frac{\e^{-y+\o{1}}}{N}\left(1-\frac1{2 g(0)\log N(1+\o{1})}\right)\right)\nonumber\\
&\to|A|(\e^{-x}-\e^{-y})\label{eq:brueb}.
\end{align}
Similarly, one can plug in \eqref{eq:star} the reverse bounds of \eqref{eq:Mills} to prove the lower bound, and thus condition i).

To show ii), we need a few more details. Let $k\ge 1$, $A_1,\,\ldots, \,A_k$ and $R_1,\ldots, R_k$ be as in the assumptions. Let us denote by
$\I_j= n A_j\cap V_N$ and $\I=\I_1\cup\ldots \cup \I_k$. For $\alpha\in \I_j$ define 
$$X_\alpha:= \one_{\left\{\frac{\vr_\alpha-b_N}{a_N}\in R_j\right\}}$$
and $p_\alpha:= \prob\left( {(\vr_\alpha-b_N)}/{a_N}\in R_j\right)$. Choose now a small $\eps>0$ and fix the neighborhood of dependence $B_\alpha:= B\left(\alpha, (\log N)^{2+2\eps}\right)\cap \I$ for $\alpha\in \I$. Let $W_j:= \sum_{\alpha\in \I_j}X_\alpha$ and $Z_j$ be as in Theorem~\ref{thm:AGG2}.

By the simple observation that 
$$\prob\left( \eta_n( A_1\times R_1)=0, \,\ldots, \,\eta_n(A_k\times R_k)=0\right)= \prob\left( W_1=0,\,\ldots, \,W_k=0\right),$$
 to prove the convergence \eqref{eq:cond_two}, we can use Theorem~\ref{thm:AGG2} and show that the error bound on the RHS of \eqref{eq:errorbd} goes to $0$.

First we bound $b_1$ as follows. By definition of $R_1,\, R_2,\,\ldots, \,R_k$, there exists $z\in \R$ such that $R_j\subset (z,+\infty]$ for $1\le j\le k$. Hence for any $1\le j\le k$,  for any $\alpha\in \I_j$ we have that 
$$p_\alpha=\prob\left( \frac{\vr_\alpha-b_N}{a_N}\in R_j\right)\le \prob( \vr_\alpha> u_N(z))\stackrel{\eqref{eq:Mills}}{\le} \frac{\e^{-\frac{u_N(z)^2}{2g(0)}}}{\sqrt{2\pi}u_N(z)}\sqrt{g(0)}.$$
The bound is independent of $\alpha$ and $j$, therefore for some $C>0$
\eq{}\label{eq:try}b_1\le C N( \log N)^{d(2+2\eps)}\e^{-2z}N^{-2}\to 0.\eeq{}
For $b_2$ note that it was shown in \cite{CCH2015} that for $z\in \R$ and $\alpha\neq \beta\in V_N$
\begin{equation}\label{eq:joint}
\prob(\vr_\alpha>u_N(z), \,\vr_\beta>u_N(z))\le\frac{(2-\kappa)^{3/2}}{\kappa^{1/2}} N^{-{2}/{(2-\kappa)}} \max\left\{ \e^{-2z}\one_{\left\{z\le 0\right\}}, \e^{-2z/(2-\kappa)}\one_{\left\{z>0\right\}}\right\}.
\end{equation}
Here we have introduced
$\kappa:= \mathbb P_0\left( \widetilde H_0=+\infty\right)\in (0,1)$ and $\widetilde H_0=\inf\left\{n\ge 1:\,S_n=0 \right\}$. Observe that for any $1\le j\le k$,  $\alpha\in \I$ and $\beta\in B_\alpha$ one has
$$\E[X_\alpha X_\beta]\le \prob(\vr_\alpha>u_N(z), \vr_\beta>u_N(z))$$
so that by \eqref{eq:joint} we can find some constant $C'>0$ such that
$$b_2\le C' N^{-{\kappa}/(2-\kappa)} (\log N)^{d(2+2\eps)} \max\left\{ \e^{-2z}\one_{\left\{z\le 0\right\}}, \e^{-2z/(2-\kappa)}\one_{\left\{z>0\right\}}\right\}\to 0.$$
Finally we need to handle $b_3$. From Section~\ref{subsec:PPP} we set for $\alpha\in \I$, $\mathcal H_1:=\sigma\left(X_\beta: \beta\in \I\setminus B_\alpha\right)$ and we define
$\mathcal H_2:=\sigma\left( \vr_\beta: \beta\in \I\setminus B_\alpha\right)$. We observe that
\begin{align*}
&b_3= \sum_{\alpha\in \I}\Ex{\left|\Ex{X_\alpha-p_\alpha\left|\right.\mathcal H_1}\right|}\le  \sum_{\alpha\in \I}\Ex{\left|\Ex{X_\alpha\left|\right.\mathcal H_2}-p_\alpha\right|}
\end{align*}
since $\mathcal H_1\subseteq \mathcal H_2$ and using the tower property of the conditional expectation.
Now denote by $U_\alpha:= \Z^d\setminus\left(\I\setminus B_\alpha\right)$.  Let us abbreviate $u_N(R_j):= \{u_N(y):\, y\in R_j\}$. Then for $\alpha\in I_j$ and $1\le j\le k$, by the Markov property of the DGFF \cite[Lemma 1.2]{PFASS} we have that
$$\Ex{X_\alpha\left|\right.\mathcal H_2}=\widetilde \prob_{U_\alpha}(\psi_\alpha+\mu_\alpha\in u_N(R_j))\qquad \prob-a.~s.
$$
where $(\psi_\alpha)_{\alpha\in \Z^d}$ is a Gaussian Free Field with zero boundary conditions outside $U_\alpha$ and 
$$\mu_\alpha=\sum_{\beta\in \I\setminus B_\alpha}\mathbb P_\alpha\left(H_{\I\setminus B_\alpha}<+\infty,\,S_{H_{\I\setminus B_\alpha}}=\beta\right)\varphi_\beta.$$
Here $H_{\Lambda}:=\inf\left\{n\ge 0:\,S_n\in \Lambda\right\}$, $\Lambda\subset\Z^d$.
Now as in \cite{CCH2015} one can show, using the Markov property, that
$$\var{\mu_\alpha}\le \sup_{\beta\in \I\setminus B_\alpha}g(\alpha,\beta)\le \frac{c}{(\log N)^{2(1+\eps)(d-2)}}$$ 
for some $c>0$. Hence we get that there exists a constant $c'>0$ (independent of $\alpha$ and $j$) such that
\eq{}\label{eq:ppp: rate_zero}
\prob\left(|\mu_\alpha|>\left(u_N(z)\right)^{-1-\eps} \right)\le c' \exp\left(-(\log N)^{(2d-5)(1+\eps)} \right).\eeq{}
Recalling that $R_j \subset (z,\,+\infty]$ for all $1\le j\le k$, this immediately shows that for $d\ge 3$
$$\sum_{j=1}^k\sum_{\alpha\in \I_j}\Ex{\left|\widetilde \prob_{U_\alpha}(\psi_\alpha+\mu_\alpha\in u_N(R_j))-p_\alpha\right|\one_{\left\{|\mu_\alpha|>\left(u_N(z)\right)^{-1-\eps} \right\}}}\to 0.$$
So to show that $b_3\to 0$ we are left with proving
\begin{equation}\label{eq:T_1}
\sum_{j=1}^k\sum_{\alpha\in \I_j}\Ex{\left|\widetilde \prob_{U_\alpha}(\psi_\alpha+\mu_\alpha\in u_N(R_j))-p_\alpha\right|\one_{\left\{|\mu_\alpha|\le\left(u_N(z)\right)^{-1-\eps} \right\}}}\to 0.
\end{equation}
We now focus on the term inside the summation. For this, first we write $R_j= \bigcup_{l=1}^m (w_l, r_l]$ with $-\infty<w_1<r_1<w_2<\cdots < r_m\le +\infty$ for some $m\ge 1$. Hence, we can expand the difference in the absolute value of \eqref{eq:T_1} as follows:
\begin{align}
&\left(p_\alpha-\widetilde \prob_{U_\alpha}(\psi_\alpha+\mu_\alpha\in u_N(R_j))\right)\nonumber\\
&=\sum_{l=1}^m\left( \prob( \vr_\alpha\in (u_N(w_l), u_N(r_l)])-\widetilde\prob_{U_\alpha}\left(\psi_\alpha+\mu_\alpha\in (u_N(w_l), u_N(r_l)]\right)\right)\nonumber\\
&=\sum_{l=1}^m \left( \prob( \vr_\alpha> u_N(w_l))-\widetilde\prob_{U_\alpha}(\psi_\alpha+\mu_\alpha> u_N(w_l))\right)\nonumber\\
&-\sum_{l=1}^m \left( \prob( \vr_\alpha> u_N(r_l))-\widetilde\prob_{U_\alpha}\left(\psi_\alpha+\mu_\alpha> u_N(r_l)\right)\right)\label{eq:schop}
\end{align}
(if $r_l=+\infty$ for some $l$, we conventionally set $\prob( \vr_\alpha> u_N(r_l))=0$ and similarly for the other summand).
Using the triangular inequality in \eqref{eq:T_1}, it turns out that to finish it is enough to show that for an arbitrary $w\in \R$,
\begin{equation}
\sum_{\alpha\in \I}\Ex{\left|\widetilde \prob_{U_\alpha}(\psi_\alpha+\mu_\alpha> u_N(w))-\prob( \vr_\alpha> u_N(w))\right|\one_{\left\{|\mu_\alpha|\le\left(u_N(z)\right)^{-1-\eps} \right\}}}\to 0.
\end{equation}
For this, first we show that on $\mathcal Q:=\left\{\prob( \vr_\alpha> u_N(w))>\widetilde \prob_{U_\alpha}(\psi_\alpha+\mu_\alpha> u_N(w))\right\}$
\begin{equation}
\sum_{\alpha\in \I}\Ex{\left(\prob( \vr_\alpha> u_N(w))-\widetilde \prob_{U_\alpha}(\psi_\alpha+\mu_\alpha> u_N(w))\right)\one_{\left\{|\mu_\alpha|\le\left(u_N(z)\right)^{-1-\eps} \right\}}\one_{\mathcal Q}}\to 0.\label{eq:to_prove}
\end{equation}
This follows from the same estimates of $T_{1,2}$ and Claim 6 of \cite{CCH2015}. Indeed on $\mathcal Q\cap\left\{|\mu_\alpha|\le\left(u_N(z)\right)^{-1-\eps} \right\}$
\begin{align*}
&\sum_{\alpha\in \I}\left( \prob( \vr_\alpha> u_N(w))-\prob_{U_\alpha}\left(\psi_\alpha+\mu_\alpha> u_N(w)\right)\right)\\
&\le \sum_{\alpha\in \I}\frac{\sqrt{g(0)}\e^{-\frac{u_N(w)^2}
{2g(0)}}}{\sqrt{2\pi }u_N(w)} \left(1-(1+\o{1})\left(\frac{\sqrt{g_{U_\alpha}
(\alpha)}u_N(w)\e^{\left(1-\frac{g(0)}{g_{U_\alpha}
(\alpha)}\right)\frac{u_N(w)^2}{2g(0)}+\o{1}}}{\sqrt{g(0) }u_N(w)
(1+\o{1})}\right)\right)\\
&\le C N\frac{\sqrt{g(0)}\e^{-\frac{u_N(w)^2}{2g(0)}}}{\sqrt{2\pi }u_N(w)}\o{1}=\o{1}.
\end{align*}

Similarly one can show that on the complementary event $\mathcal Q^{\mathrm c}$ (recall \eqref{eq:to_prove} for the definition of $\mathcal Q$)
$$\sum_{\alpha\in \I}\Ex{\left(\widetilde \prob_{U_\alpha}(\psi_\alpha+\mu_\alpha> u_N(w))-\prob( \vr_\alpha> u_N(w))\right)\one_{\left\{|\mu_\alpha|\le\left(u_N(z)\right)^{-1-\eps} \right\}}\one_{\mathcal Q^{\mathrm c}}}=\o{1}.$$
This shows that $b_3\to 0$. Hence from Theorem~\ref{thm:AGG2} it follows that
$$\left|\prob( W_1=0,\ldots, W_k=0)- \prod_{j=1}^k \prob\left(Z_j=0\right)\right|=\o{1},$$
having used the independence of the $Z_j$'s.
Notice that by definition $Z_j$ is a Poisson random variable with intensity $\sum_{\alpha\in \I_j} \prob\left( (\vr_\alpha-b_N)/a_N\in R_j\right)$. Decomposing $R_j$ as a union of finite intervals and using Mills ratio, similarly to the argument leading to \eqref{eq:brueb}, one has
$$P(Z_j=0)\to \exp( -|A_j| \omega(R_j))$$
(recall $\omega(R_j)=\int_{R_j} \e^{-z} \De z$). Hence it follows that
\eq{}\label{eq:dig}\prod_{j=1}^k \prob(Z_j=0)\to \exp\left(-\sum_{j=1}^k |A_j|\omega(R_j)\right),\eeq{}
which completes the proof of ii) and therefore of Theorem~\ref{thm:ppp}.
\end{proof}
\section{Proof of Theorem~\ref{thm:ppp_2}: the finite-volume case}\label{sec:fin} We will now show the theorem for the field with zero boundary conditions. As remarked in the Introduction, since on the bulk defined in \eqref{eq:bulk} we have a good control on the conditioned field, we will first prove convergence therein, and then we will use a converging-together theorem to achieve the final limit.
We will first need some notation used throughout the Section: first, we consider $(\psi_\alpha)_{\alpha\in V_N}$ with law $\widetilde \prob_N:=\widetilde \prob_{V_N}$. We also use the shortcut $g_N(\cdot,\,\cdot)=g_{V_N}(\cdot,\,\cdot)$. We will need the notation $\Co_K^+(E)$ for the set of positive, continuous and compactly supported functions on $E=[0,1]^d\times (-\infty,+\infty]$. 
\begin{figure}[!ht]
\vspace{-.5in}
\centering
\includegraphics[width=.7\textwidth]{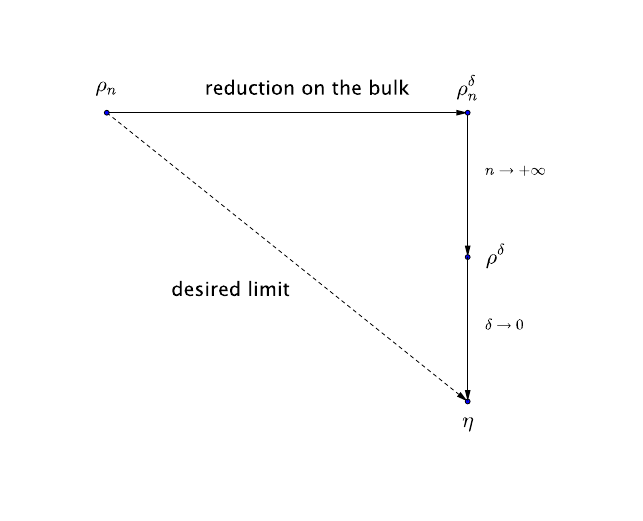}
\vspace{-.5in}
\caption{Sketch of the proof of Theorem~\ref{thm:ppp_2}}
\end{figure}

We first begin with a lemma on the point process convergence on bulk.
Define a point process on $E$ by 
\begin{equation}
\rho_n^\delta(\cdot) =\sum_{\alpha \in V_N^\delta} \varepsilon_{\left(\frac{\alpha}{n}, \frac{\psi_\alpha-b_N}{a_N}\right)}(\cdot).
\end{equation}
\begin{lemma}\label{lemma:bulk}
Let $\delta>0$. On $\mathcal M_p(E)$, $\rho_n^{\delta}\overset{d}\to \rho^{\delta}$ where $\rho^\delta$ is a Poisson random measure with intensity $\De t_{|_{[\delta,1-\delta]^d}}\otimes \left(\e^{-x}\De x\right)$\footnote{$\De t_{|_{[\delta,1-\delta]^d}}$ is the restriction of the Lebesgue measure to $[\delta,1-\delta]^d$.}.
\end{lemma}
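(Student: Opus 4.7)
The plan is to adapt the Stein--Chen/Kallenberg proof of Theorem~\ref{thm:ppp} to the zero-boundary field $\psi$ restricted to the bulk $V_N^\delta$, taking advantage of Lemma~\ref{lemma:almost_g}, which makes the covariance of $(\psi_\alpha)_{\alpha\in V_N^\delta}$ asymptotically indistinguishable from that of $(\vr_\alpha)_{\alpha\in V_N^\delta}$. I apply Kallenberg's theorem and verify (i) $\E[\rho_n^\delta(A\times(x,y])] \to |A\cap[\delta,1-\delta]^d|(\e^{-x}-\e^{-y})$ and (ii) the product void-probability limit analogous to \eqref{eq:cond_two}, with $|A_j|$ replaced by $|A_j\cap[\delta,1-\delta]^d|$.

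Condition (i) follows from Mills ratio \eqref{eq:Mills} exactly as in \eqref{eq:star}--\eqref{eq:brueb}: the only modification is that $\var{\psi_\alpha}=g_{V_N}(\alpha)=g(0)(1+\O{N^{(2-d)/d}})$ uniformly on $V_N^\delta$ by Lemma~\ref{lemma:almost_g}, a correction absorbed into the $\o{1}$ terms already present. For (ii), I set $X_\alpha:=\one_{\{(\psi_\alpha-b_N)/a_N\in R_j\}}$ for $\alpha\in\I_j:=nA_j\cap V_N^\delta$ and keep the same dependence neighborhood $B_\alpha:=B(\alpha,(\log N)^{2+2\eps})\cap\I$. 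Since $(\log N)^{2+2\eps}\ll\delta n$, this ball sits inside $V_N$ for every $\alpha\in V_N^\delta$, so the Markov property of $\widetilde\prob_N$ can be applied on it. The bound $b_1\to 0$ is a literal copy of \eqref{eq:try}. For $b_2\to 0$, the same two-point computation leading to \eqref{eq:joint} goes through: Lemma~\ref{lemma:almost_g} controls $g_{V_N}(\alpha),g_{V_N}(\beta)$ and the correlation $g_{V_N}(\alpha,\beta)/\sqrt{g_{V_N}(\alpha)g_{V_N}(\beta)}\le g(\alpha,\beta)/(g(0)(1+\o{1}))$, so the exponent $2/(2-\kappa)$ in \eqref{eq:joint} is reproduced up to a further $\o{1}$.

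The main work is $b_3$. Let $U_\alpha:=V_N\setminus(\I\setminus B_\alpha)$. By the tower property and the Markov property of $\widetilde\prob_N$, conditionally on $\sigma(\psi_\beta:\beta\in V_N\setminus U_\alpha)$ the field $\psi_\alpha$ is Gaussian with random mean $\mu_\alpha$ given by the same hitting-time representation as in Theorem~\ref{thm:ppp} (with $\vr_\beta$ replaced by $\psi_\beta$) and variance $g_{U_\alpha}(\alpha)$. The identity $\var{\mu_\alpha}=g_{V_N}(\alpha)-g_{U_\alpha}(\alpha)$ combined with the usual random-walk estimate gives
\begin{equation*}
\var{\mu_\alpha}\le\sup_{\beta\in\I\setminus B_\alpha}g(\alpha,\beta)\le\frac{c}{(\log N)^{2(1+\eps)(d-2)}},
\end{equation*}
reproducing the variance bound used in Theorem~\ref{thm:ppp}. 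A Gaussian tail estimate as in \eqref{eq:ppp: rate_zero} then confines the analysis to the event $\{|\mu_\alpha|\le u_N(z)^{-1-\eps}\}$, on which a Mills-ratio comparison between $\prob(\psi_\alpha>u_N(w))$ (variance $g_{V_N}(\alpha)$) and $\widetilde\prob_{U_\alpha}(\psi_\alpha+\mu_\alpha>u_N(w))$ (variance $g_{U_\alpha}(\alpha)$) gives $b_3\to 0$: both variances equal $g(0)(1+\o{1})$ uniformly in $\alpha\in V_N^\delta$, so the comparison is tight to order $\o{N^{-1}}$ and sums to $\o{1}$ over $\I$.

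The main obstacle I anticipate is bookkeeping: verifying that every occurrence of $g(0)$ in the infinite-volume argument can be quantitatively replaced by $g_{V_N}(\alpha)$ or $g_{U_\alpha}(\alpha)$ without disturbing the $\o{N^{-1}}$ uniform bounds needed to sum over $\I$. Lemma~\ref{lemma:almost_g} together with standard Green's function estimates on the ball makes this routine. Once $b_1,b_2,b_3\to 0$, Theorem~\ref{thm:AGG2} and independence of the $Z_j$'s yield $|\prob(W_1=0,\ldots,W_k=0)-\prod_j\prob(Z_j=0)|\to 0$, and (i) applied to each rectangle gives $\prod_j\prob(Z_j=0)\to\exp(-\sum_j|A_j\cap[\delta,1-\delta]^d|\omega(R_j))$, completing Kallenberg's criterion.
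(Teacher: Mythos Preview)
Your proposal is correct and follows essentially the same approach as the paper's proof: you apply Kallenberg's criterion, verify the intensity condition via Mills ratio with Lemma~\ref{lemma:almost_g} replacing $g(0)$ by $g_{V_N}(\alpha)=g(0)(1+\O{N^{(2-d)/d}})$, and handle the void probabilities through Theorem~\ref{thm:AGG2} with the same dependence neighborhoods $B_\alpha$, the same bounds on $b_1,b_2$, and the same Markov-property decomposition for $b_3$ with $U_\alpha=V_N\setminus(\I\setminus B_\alpha)$. The only cosmetic difference is notation (the paper writes $\xi_\alpha,h_\alpha$ where you write $\psi_\alpha,\mu_\alpha$ for the conditioned field and its drift), and your explicit remark that the polylog ball fits inside $V_N$ for $\alpha\in V_N^\delta$ is a helpful clarification the paper leaves implicit.
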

\begin{proof}
We will show i) and ii) of Page~\pageref{cond} (and from which we will borrow the notation starting from now).
 \\
i) We begin with an upper bound on $\widetilde\E_N\left[{\rho_n^\delta(A\times (x,y])}\right]$:
\begin{align}
\sum_{\alpha\in nA\cap V_N^\delta}&\widetilde\prob_N(\psi_\alpha>u_N(x))-\widetilde\prob_N(\psi_\alpha>u_N(y))\nonumber\\
&\stackrel{\eqref{eq:Mills}}{\le}\sum_{\alpha\in nA\cap V_N^\delta} \frac{\e^{-\frac{u_N(x)^2}{2g_N(\alpha)}}}{\sqrt{2\pi}u_N(x)}\sqrt{g_N(\alpha)}-\frac{\e^{-\frac{u_N(y)^2}{2g_N(\alpha)}}}{\sqrt{2\pi}u_N(y)}\sqrt{g_N(\alpha)}\left(1+\o{1} \right)\nonumber\\
&\stackrel{\text{Lemma\,\ref{lemma:almost_g}}}{=}\sum_{\alpha\in nA\cap V_N^\delta} \frac{\e^{-\frac{u_N(x)^2}{2g(0)(1+c_n)}}}{\sqrt{2\pi}u_N(x)}\sqrt{g(0)}(1+c_n)-\frac{\e^{-\frac{u_N(y)^2}{2g(0)(1+c_n)}}}{\sqrt{2\pi}u_N(y)}\sqrt{g(0)}\left(1+c_n\right)\nonumber\\
&\stackrel{n\to+\infty}{\longrightarrow}(\e^{-x}-\e^{-y})\left|A\cap [\delta,\,1-\delta]^d\right|.\label{eq:ger}
\end{align}
We stress that in the second step the error term $c_n:=\O{n^{2-d}}$ coming from Lemma~\ref{lemma:almost_g} guarantees the convergence in the last line. The lower bound follows similarly.\\
ii)  To show the second condition we again use Theorem~\ref{thm:AGG2}. Let $A_1,\ldots, A_k$ and $R_1,\ldots, R_k$ be as in proof of Theorem~\ref{thm:ppp}.  Let $\I_j:=n A_j\cap V_N^\delta$ and $\I=\I_1\cup\cdots \cup \I_k$. For $\eps>0$ we are setting $B_\alpha:=B\left(\alpha,\,(\log N)^{2(1+\eps)}\right)\cap \I$.  Note that, albeit slightly different, we are using the same notations for the neighborhood of dependence and the index sets of Section~\ref{sec:inf}, but no confusion should arise. Observe that there exists $z\in \R$ such that for all $1\le j\le k$, $R_j\subset (z,\infty]$; we have
$$
p_\alpha=\widetilde\prob_N\left( \f{\psi_\alpha-b_N}{a_N}\in u_N(R_j)\right)\le\widetilde\prob_N\left({\psi_\alpha}>u_N(z)\right)\stackrel{\eqref{eq:Mills}}{\le} \frac{\e^{-\frac{u_N(z)^2}{2g(0)}}}{\sqrt{2\pi}u_N(z)}\sqrt{g(0)}
$$
where we have also used the fact that $g_N(\alpha)\le g(0)$. The bound on $b_1$ (cf. Theorem~\ref{thm:AGG2}) follows exactly as in \eqref{eq:try} and yields that, for some $C>0$,
$$
b_1\le C N  ( \log N)^{d(2+2\eps)}\e^{-2z}N^{-2}\to 0.
$$
The calculation of $b_2$ can be performed similarly using the covariance matrix of the vector $(\psi_\alpha,\,\psi_\beta)$, $\alpha\neq\beta\in V_N^\delta$ and Lemma~\ref{lemma:almost_g}. This gives that for some $C,\,C'>0$ independent of $\alpha,\beta\in V_N^\delta$
\begin{align*}
& b_2\le\sum_{\alpha\in \I}\sum_{\beta\in B_\alpha}\frac{C}{\log N}\exp\left(-  \f{u_N(z)^2}{g(0)+g(\alpha-\beta)}\left(1+\O{N^{(2-d)/d}}\right)\right)\\
&\le C' N^{-\kappa/(2-\kappa)}(\log N)^{2d(1+\eps)} \max\left\{ \e^{-2z}\one_{\left\{z\le 0\right\}}, \,\e^{-2z/(2-\kappa)}\one_{\left\{z>0\right\}}\right\}\to 0
\end{align*}
(cf. \citet{CCH2015}). We will now pass to $b_3$. We repeat our choice of $\mathcal H_1=\sigma\left(X_\beta:\,\beta\in \I\setminus B_{\alpha}\right)$ and $\mathcal H_2=\sigma\left(\psi_\beta:\,\beta\in \I\setminus B_{\alpha}\right)$ so that $b_3$ becomes
$$
\sum_{j=1}^k\sum_{\alpha\in \I_j}\widetilde{\mathsf E}_N\left[\left|\widetilde{\mathsf E}_N\left[X_\alpha-p_\alpha|\mathcal H_1\right]\right|\right]\le\sum_{j=1}^k\sum_{\alpha\in \I_j}
\widetilde{\mathsf E}_N\left[\left|\widetilde{\mathsf E}_N\left[X_\alpha|\mathcal H_2\right]-p_\alpha\right|\right].
$$
We define $U_\alpha:=V_N\setminus(\I\setminus B_\alpha)$. By the Markov property of the DGFF
\eq{}\label{eq:maya}
\widetilde{\mathsf E}_N\left[{X_\alpha\left|\right.\mathcal H_2}\right]=\widetilde \prob_{U_\alpha}(\xi_\alpha+h_\alpha\in u_N(R_j))\quad \widetilde \prob_N-a.~s.
\eeq{}
for $(\xi_\alpha)_{\alpha\in \Z^d}$ a DGFF with law $\widetilde \prob_{U_\alpha}$ and $(h_\alpha)_{\alpha\in \Z^d}$ is independent of $\xi$. From \citet{CCH2015} we can see that, for $\alpha\in V_N^\delta$ and $N$ large enough such that $B\left(\alpha,\,(\log N)^{2(1+\eps)}\right)\subsetneq V_N$,
\eqa{*}
\var{h_\alpha}&=&\sum_{\beta\in \I\setminus B_\alpha}\mathbb P_\alpha\left(H_{\I\setminus B_\alpha}<+\infty,\,S_{H_{\I\setminus B_\alpha}}=\beta\right)g_N(\alpha,\,\beta)\\
&\le& \sup_{\beta\in\I\setminus B_\alpha}g_{N}(\alpha,\,\beta)\le\frac{c}{(\log N)^{2(1+\eps)(d-2)}}.
\eeqa{*}
This yields
\eq{}\label{eq:pani}
\sum_{j=1}^k\sum_{\alpha\in \I_j}\widetilde{\mathsf E}_N\left[\left|\widetilde \prob_{ U_\alpha}(\xi_\alpha+h_\alpha)>u_{N}(R_j))-p_\alpha\right|\one_{\left\{|h_\alpha|>\left(u_{N}(z)\right)^{-1-\eps} \right\}}\right]\to 0.
\eeq{}
It then suffices to show
\eq{}\label{eq:pani2}
\sum_{j=1}^k\sum_{\alpha\in \I_j}\widetilde{\mathsf E}_N\left[\left|\widetilde \prob_{ U_\alpha}(\xi_\alpha+h_\alpha)>u_{N}(R_j))-p_\alpha\right|\one_{\left\{|h_\alpha|\le\left(u_{N}(z)\right)^{-1-\eps} \right\}}\right]\to 0.
\eeq{}
One sees that the breaking up \eqref{eq:schop} can be performed also here replacing $\vr_\alpha$ and $\psi_\alpha$ (with their laws) with $\psi_\alpha$ and $\xi_\alpha$ (with their laws) respectively, and $\mu_\alpha$ with $h_\alpha$. Accordingly, it is enough to show that 
\begin{align}\label{eq:spar}
&\sum_{\alpha\in \I}\widetilde{\mathsf{E}}_N\left[\left|\widetilde \prob_{U_\alpha}(\xi_\alpha+h_\alpha> u_N(w))-\widetilde\prob_N( \psi_\alpha> u_N(w))\right|\one_{\left\{|h_\alpha|\le\left(u_N(z)\right)^{-1-\eps} \right\}}\right]\to 0
\end{align}
for all $w\in \R$. To this aim, we choose for any $w\in \R$ the event 
$$\mathcal Q':=\left\{\widetilde \prob_N( \psi_\alpha>u_N(w))>\widetilde \prob_{U_\alpha}(\xi_\alpha+h_\alpha> u_N(w))\right\}$$
and we proceed as in \eqref{eq:to_prove} with the help of Lemma~\ref{lemma:almost_g} to show \eqref{eq:spar}. Given this, the convergence $b_3\to 0$ is finally ensured. Hence we can conclude that
$$
\|\mathcal L(W_1,\,\ldots,\,W_k)- \mathcal L(Z_1,\,\ldots,\,Z_k)
\|_{TV}\to 0$$
where $Z_j$ are i.~i.~d. Poisson of mean $p_\alpha$. By Mills ratio, as in \eqref{eq:ger} we see that
$$
\prob(Z_j=0)\to \exp\left(-{\left|A_j\cap[\delta,\,1-\delta]^d\right|}\omega(R_j)\right).
$$
From this it follows that the two conditions i) and ii) of Kallenberg's Theorem are satisfied, and thus we obtain the convergence to a Poisson point process with mean measure given in i).
\end{proof}
\begin{proof}[Proof of Theorem~\ref{thm:ppp_2}]
$\mathcal M_p(E)$ is a Polish space with metric $\mathrm{d}_p$:
$$
\mathrm{d}_p(\mu,\,\mu')=\sum_{i\ge 1}\frac{\min\left\{\left|\mu(f_i)-\mu'(f_i)\right|,\,1 \right\}}{2^i},\quad \mu,\,\mu'\in \mathcal M_p(E)
$$
for a sequence of functions $f_i\in \Co_K^+(E)$ (cf. \citet[Section 3.3]{Resnick}). Therefore we are in the condition to use a converging-together theorem \cite[Theorem 3.5]{ResnickHeavy}, namely to prove that $\rho_n\overset{d}\to \eta$ it is enough to show the following:
\begin{itemize}
\item[(a)] $\rho_n^\delta\overset{d}\to \rho^\delta$, as $n\to+\infty$.
\item[(b)] $\rho^{\delta}\overset{d}\to \eta$ as $\delta\to 0$.
\item[(c)] For every $\eps>0$, 
\begin{equation}\label{eq:kw}
\lim_{\delta\to 0} \lim_{n\to+\infty}\widetilde \prob_N\left(\mathrm{d}_p\left( \rho_n,\rho_n^\delta\right)>\eps\right)=0.
\end{equation}
\end{itemize}
Note that by Lemma~\ref{lemma:bulk}, (a) is satisfied. 
 For $f\in \Co_K^{+}(E)$, the Laplace functional  of $\rho^{\delta}$ is given by (cf. \citet[Prop. 3.6]{Resnick})
$$\Psi_\delta(f):= \Ex{ \exp\left(-\rho^{\delta}(f)\right)}= \exp\left(-\int_E \left(1-\e^{-f(t,x)}\right)\De t_{|_{[\delta,1-\delta]^d}}\e^{-x}\De x\right).$$
Hence by the dominated convergence theorem we can exchange limit and expectation as $\delta\to 0$ to obtain that
$$\Psi_\delta(f)\to  \exp\left(-\int_E \left(1-\e^{-f(t,x)}\right)\De t\e^{-x}\De x\right)$$
and the right hand side is the Laplace functional of $\eta$ at $f$. This shows (b). 

Hence to complete the proof it is enough to show~\eqref{eq:kw}. Thanks to the definition of the metric $\mathrm{d}_p$ it suffices to prove that for $f\in \Co_K^{+}(E)$ and for $\eps>0$
$$\limsup_{\delta\to 0}\lim_{n\to+\infty}\widetilde\prob_N\left( \left|\rho_n(f)-\rho_n^\delta(f)\right|>\eps\right)=0.$$
Without loss of generality assume that the support of $f$ is contained in $[0,1]^d\times [z_0,\,+\infty)$ for some $z_0\in \R$.  Choosing $n$ large enough such that $u_N(z_0)>0$ and $g_N(\alpha)\le g(0)$, we obtain that
\begin{align*}
\widetilde\E_N&\left[\left|\rho_n(f)- \rho_n^{\delta}(f)\right|\right]= \widetilde \E_N\left[\sum_{\alpha\in V_N\setminus V_N^{\delta} } f\left(\frac{\alpha}{n}, \frac{\psi_\alpha-b_N}{a_N}\right)\one_{\left\{\frac{\psi_\alpha-b_N}{a_N}>z_0\right\}}\right]\\
&\le \sup_{z\in E}|f(z)| \sum_{\alpha\in V_N\setminus V_N^{\delta}} \widetilde \prob_N\left(\frac{\psi_\alpha-b_N}{a_N}>z_0\right)\stackrel{\eqref{eq:Mills}}{\le}C\sum_{\alpha\in V_N\setminus V_N^\delta} \f{\e^{-u_N(z_0)^2/g(0)}}{\sqrt{2\pi}u_N(z_0)}\sqrt{g(0)}\\
&\le C'\left(1-(1-2\delta)^d\right)\e^{-z_0} 
\end{align*}
as $n\to+\infty$ for some positive constants $C,\,C'$. Now letting $\delta\to 0$ the result follows and this completes the proof.

\end{proof}
\bibliographystyle{abbrvnat}

\bibliography{literatur}

\end{document}